\theoremstyle{definition}
\numberwithin{dummy}{section}
\theoremstyle{plain}
\newtheorem{thm}{Theorem}[section]
\newtheorem{prop}[thm]{Proposition}
\theoremstyle{definition}
\newtheorem{defn}{Definition}[section]
\theoremstyle{remark}
\DeclareMathAlphabet\mathbb{U}{msb}{m}{n}
\let\oldhat\hat                
\renewcommand{\hat}[1]{\oldhat{\mathbf{#1}}}
\tikzset{
	>=stealth',
	vertex/.style={
		circle,
		draw=#1,
		fill=#1,
		inner sep = 2pt,
		outer sep = 0pt,
		text centered},
	edge/.style={
		-,
		thin,
		draw=black,
		},
	text style/.style={
		sloped,
		text=black,
		font=\normalsize,
		above}
	}
\tikzset{every loop/.style={min distance=10mm,in=45,out=135,looseness=0}}
\title{Perfect Domination in Knights Graphs }
\author{Todd Fenstermacher, Soumendra Ganguly, Renu Laskar \\ Clemson University}
\begin{document}
	
	\begin{abstract}  
		For a graph $G = (V,E),$ a subset $S$ of $V$ is a \textit{perfect dominating set} of $G$ if every vertex not in $S$ is adjacent to exactly one vertex in $S.$ The perfect domination number, $\gamma_p(G),$ is the minimum cardinality of a perfect dominating set of $G.$  The perfect domination number is found for knights graphs on square, rectangular, and infinite chessboards. Indeed, exact values or bounds are given for all chessboards except those with 3 rows and number of columns congruent to 1, 2, or 3 modulo 8. 
	\end{abstract}
	
	\maketitle
	
	
\section {Introduction}

For a graph $G = (V,E),$ we define a perfect dominating set as in \cite{Cockayne}. 

\begin{defn}
	For a graph $G = (V,E),$ a set $S \subseteq V$ is a \textit{perfect dominating set} if $\forall v \in V - S,$ $|N(v) \cap S| = 1.$ 		
\end{defn}
Here $N(v)$ is the open neighborhood of $v$ and contains all vertices adjacent to $v.$ The \textit{perfect domination number} is the smallest such dominating set, that is 
\[\gamma_p(G) = \min\{ |S| : S \text{ is a perfect dominating set of }G	\}.\]

A concept similar to perfect domination is efficient domination, which we define next. 
	
\begin{defn}
	For a graph $G = (V,E),$ a set $S \subseteq V$ is an \textit{efficient dominating set} if $\forall v \in V,$ $|N[v] \cap S| = 1.$ 
\end{defn}

Here $N[v]$ is the closed neighborhood of $v$ and contains all vertices adjacent to $v,$ as well as $v$ itself. Now, following \cite{Sinko}, we define the knights graph $KN_{n,m}.$ 
\begin{defn} 
	The knights graph $KN_{n,m},$ is a graph of order $nm$ where each vertex represents a square on a chessboard with $n$ columns and $m$ rows, and two vertices are adjacent if a knight can move between the two squares corresponding to these two vertices. 
\end{defn}

Two representations of $KN_{4,4}$ are given in Figure \ref{4x4}. In general, the representation of $KN_{n,m}$ using vertices and edges is visually convoluted, so the representation of $KN_{n,m}$ using a board is preferred. When referring to specific squares of the chessboard, we use ordered pairs $(i,j)$ with $1 \leq i \leq n$ and $1 \leq j \leq m.$ Note that we follow the chessboard standard of naming the column first and row second (see \cite{Elkies},\cite{Sinko}) so that $(i,j)$ represents the square in column $i$ and row $j.$ Again, note this is the opposite of the matrix notation where $(i,j)$ would represents row $i$ and column $j.$ Unless context specifies otherwise, the bottom left square is assumed to be $(1,1),$ and square $(i,j)$ represents the square in column $i$ and row $j.$ Furthermore, in $KN_{n,m},$ the squares adjacent to square $(a,b)$ are $\{ (a\pm1,b\pm2)\}\bigcup \{(a\pm2,b\pm1)\}.$ For example, the closed neighborhood of $(3,4)$ in $KN_{8,8}$ is 
\[ N[(3,4)] = \{  (1,3),(1,5),(2,2),(2,6),(3,4),(4,2),(4,6),(5,3),(5,5)  \}. \]
Sinko and Slater studied efficient domination on knights graphs in \cite{Sinko}. Here we study perfect domination on knights graphs. We will first explore finite knights graphs before turning to infinite knights graphs. In our investigation of finite knights graphs we make extensive use of computer searches. All code used is available upon request from the first author.  

		\begin{figure}
			\begin{tikzpicture}
			\foreach \x in {0,...,3}
			\foreach \y in {0,...,3} { 
				\draw node at (\x, \y) [vertex,draw] {}; }

			\draw[-] (0,0) -- (1,2);
			\draw[-] (0,0) -- (2,1);
			\draw[-] (1,0) -- (0,2);
			\draw[-] (1,0) -- (2,2);
			\draw[-] (1,0) -- (3,1);
			\draw[-] (2,0) -- (1,2);
			\draw[-] (2,0) -- (3,2);
			\draw[-] (2,0) -- (0,1);
			\draw[-] (3,0) -- (1,1);
			\draw[-] (3,0) -- (2,2);
			\draw[-] (0,1) -- (1,3);
			\draw[-] (0,1) -- (2,2);
			\draw[-] (1,1) -- (0,3);
			\draw[-] (1,1) -- (2,3);
			\draw[-] (1,1) -- (3,2);
			\draw[-] (2,1) -- (0,2);
			\draw[-] (2,1) -- (1,3);
			\draw[-] (2,1) -- (3,3);
			\draw[-] (3,1) -- (1,2);
			\draw[-] (3,1) -- (2,3);
			\draw[-] (0,2) -- (2,3);
			\draw[-] (1,2) -- (3,3);
			\draw[-] (2,2) -- (0,3);
			\draw[-] (3,2) -- (1,3);
			

			\foreach \i in {0,...,4} {
				\draw [very thick, black] (.75*\i + 5,0) -- (.75*\i+5,3);
			}
			\foreach \i in {0,...,4} {
				\draw [very thick,black] (5,.75*\i) -- (8,.75*\i);
			}
			
			
			\end{tikzpicture}
			\caption{Two representations of $KN_{4,4}$}
			\label{4x4}
		\end{figure}
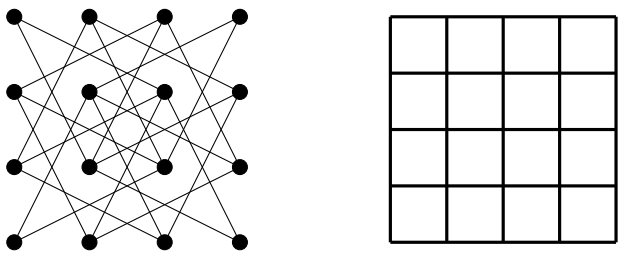

\section {Finite Boards}

To place our question on an actual chessboard would, of course, mandate using $KN_{8,8}.$ However, the only boards with nontrivial perfect dominating knights have fewer than $5$ rows or $5$ columns. This is a result of the following theorem. 

\begin{thm}\label{5x5}
	 $\gamma_p(KN_{n,m}) = nm$ for $m,n \geq 5.$ 
\end{thm}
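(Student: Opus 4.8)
The plan is to show that the only perfect dominating set of $KN_{n,m}$ (for $n,m\ge 5$) is $V$ itself; since $V$ vacuously satisfies the definition and has cardinality $nm$, this yields $\gamma_p(KN_{n,m})=nm$. Equivalently, I will assume toward a contradiction that some perfect dominating set $S$ has nonempty complement $T=V\setminus S$, and derive a contradiction. The defining condition says each $v\in T$ has exactly one neighbor in $S$; phrased from the $S$ side, the sets $N(s)\cap T$ with $s\in S$ are pairwise disjoint and partition $T$, so the $S$--$T$ edges form a disjoint union of stars in which every vertex of $T$ is a leaf. I will also record the elementary observation that $KN_{n,m}$ is triangle-free, since no two knight-neighbors of a common square are themselves a knight's move apart; this forces two adjacent vertices of $T$ to be dominated by distinct squares of $S$.

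The engine of the proof is an extremal argument. Among all vertices of $T$ choose one, $v=(i,j)$, minimizing the functional $i+j$. The two squares $(i-1,j-2)$ and $(i-2,j-1)$ are knight-neighbors of $v$, each with $i+j$ strictly smaller (by $3$), and both lie on the board whenever $i\ge 3$ and $j\ge 3$. By minimality they must lie in $S$, so $v$ would have at least two neighbors in $S$, contradicting $|N(v)\cap S|=1$. Hence the minimizing vertex satisfies $i\le 2$ or $j\le 2$. Running the same argument on the functionals $-(i+j)$, $i-j$, and $j-i$ — that is, sweeping in from each of the four corners, which is legitimate by the dihedral symmetry of the board and of the knight's move — shows that the corresponding extreme vertices of $T$ all lie within distance two of the relevant edges.

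The remaining work, and the main obstacle, is the boundary analysis. The clean counting above fails on the edges precisely because a boundary square has too few smaller-functional neighbors: for a square $(1,j)$ only one knight move decreases $i+j$, namely the move to $(2,j-2)$, so minimality yields at most one forced $S$-neighbor, which is \emph{consistent} with perfect domination rather than contradictory. To finish I would fix the extreme vertex $v$ guaranteed above (say with $i\le 2$), invoke $n,m\ge 5$ to ensure that all finitely many relevant squares of the bottom-left corner region actually exist, and then run the bounded case analysis on where the unique $S$-neighbor of $v$ can sit. In each branch the star structure together with triangle-freeness should propagate forced memberships inward until some square is compelled to have two neighbors in $S$, or until a genuinely interior vertex of $T$ is produced so that the extremal contradiction of the previous paragraph applies. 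I expect this corner/edge casework, rather than the interior argument, to be the delicate part, and it is presumably here that the computer search is used to certify that no consistent boundary configuration survives.
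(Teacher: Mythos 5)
Your interior argument is sound, and it is a genuinely different reduction from the one in the paper: choosing $v=(i,j)\in T=V\setminus S$ minimizing $i+j$, the squares $(i-1,j-2)$ and $(i-2,j-1)$ both lie on the board once $i,j\ge3$, have strictly smaller functional value, hence lie in $S$ by minimality, contradicting $|N(v)\cap S|=1$; this correctly confines the minimizer to the strip $i\le2$ or $j\le2$. But from that point on your text is a plan rather than a proof: every remaining step is modal (``should propagate,'' ``I expect,'' ``presumably''), and the part you defer is precisely where the content of the theorem lives. The paper's entire proof \emph{is} that casework, done by computer: there are exactly $13$ minimal configurations (up to symmetry) that perfectly dominate the lower-left $3\times3$ sub-board, and each one leaves a square of the $5\times5$ corner --- $(1,4)$, $(1,5)$, or $(5,5)$ --- that can only be dominated by forcing knights onto every square of the board. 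Since you have not carried out the corresponding branch analysis, the theorem is not established.

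There are also two concrete defects in the sketch itself. First, the casework you call ``bounded'' is not: the minimizer with $i\le2$ may sit at $(1,j)$ or $(2,j)$ for arbitrarily large $j$, so you must either argue uniformly in $j$ or first pin $v$ into a bounded corner region by exploiting minimality harder. (This is doable --- for instance, if $v=(1,j)$ with $j\ge4$, then $(2,j-2)\in S$ by minimality, so the neighbor $(3,j-1)$ of $v$ lies in $T$; yet $(1,j-2)$ and $(2,j-3)$ both have coordinate sum $j-1<j+1$ and hence lie in $S$, giving $(3,j-1)$ two dominators, a contradiction --- but nothing of this kind appears in your proposal.) Second, the disjunction ending your plan is logically off: producing ``a genuinely interior vertex of $T$'' is not by itself a contradiction, since your extremal argument constrains only the \emph{minimizing} vertex of $T$; to close a branch you must exhibit either a $T$-vertex with functional value below the minimum or a $T$-vertex with two neighbors in $S$. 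Until the corner analysis is actually executed (by hand, or by the kind of exhaustive search the paper relies on), what you have is a promising reduction, not a proof.
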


\begin{proof}
	Let $m,n \geq 5$ and consider $KN_{n,m}.$ First construct a nontrivial set $S$ of knights which perfectly dominates the lower left $3\times 3$ sub-board. That is, construct $S$ by adding only knights which are necessary to perfectly dominate the $3 \times 3$ sub-board. 
	
	Accounting for symmetry, there are 13 distinct ways to construct $S.$ However, for each such construction, there are squares in the lower left $5 \times 5$ sub-board which are not dominated. Moreover, adding knights to any of these constructions in order to perfectly dominate these squares necessitates placing a knight on every square of the board. To demonstrate this, we divide the 13 different constructions of $S$ into 3 cases. Note that in the following cases we only include squares in $S$ which are also in the lower left $5 \times 5$ sub-board so that each construction makes sense in the case $n=m=5.$ \\
	
	Case 1. 
	\begin{eqnarray*}
		& & S = \{ (1,3),(2,3),(3,1),(4,1)  \} \text{ (see Figure \ref{S1})}	\\
\text{or} & & S = \{ (1,1),(1,3),(2,1),(2,4),(3,2),(4,3),(5,1)  \} \\
\text{or} & & S = \{ (1,2),(1,3),(2,4),(3,1),(3,2),(4,3),(5,1)  \} \\
\text{or} & & S = \{ (1,2),(1,5),(2,3),(3,1),(3,4),(4,2),(5,3) \}
	\end{eqnarray*}

		\begin{figure}[H]
		\begin{tikzpicture}
		\foreach \i in {0,...,5} {
			\draw [very thick,black] (.5*\i-.25,-.25) -- (.5*\i-.25,2.75);
		}
		\foreach \i in {0,...,5} {
			\draw [very thick,black] (-.25,.5*\i-.25) -- (2.75,.5*\i-.25);
		}

		\draw node at (0,1) {N};
		\draw node at (.5,1) {N};
		\draw node at (1,0) {N};
		\draw node at (1.5,0) {N};
		
		\draw node at (0,0) {X};
		\draw node at (0,.5) {X};
		\draw node at (0,2) {X};
		\draw node at (.5,0) {X};
		\draw node at (.5,.5) {X};
		\draw node at (.5,2) {X};
		\draw node at (1,.5) {X};
		\draw node at (1,1) {X};
		\draw node at (1,1.5) {X};
		\draw node at (1,2) {X};
		\draw node at (1.5,.5) {X};
		\draw node at (1.5,1) {X};
		\draw node at (1.5,1.5) {X};
		\draw node at (2,.5) {X};
		\draw node at (2,1) {X};
		\draw node at (2.5,.5) {X};

		\end{tikzpicture}
		\caption{ N represents a knight, X represents a dominated square }
		\label{S1}
		\end{figure}
	
For each of these four possibilities of $S,$ the square $(1,4)$ is not dominated. Moreover, adding any knight to $S$ to dominate $(1,4)$ forces knights to be placed on every square of $KN_{n,m}.$ \\

	Case 2. 
	\begin{eqnarray*}
		  & & S = \{(1,1),(2,5),(3,3),(4,3)\} \\
\text{or} & & S = \{(1,1),(2,1),(3,3),(4,3)\} \\
\text{or} & & S = \{(1,1),(1,2),(1,4),(2,5),(3,3)\} \\
\text{or} & & S = \{(1,1),(1,2),(2,5),(3,3),(4,1)\} \\
\text{or} & & S = \{(1,1),(1,2),(1,4),(2,1),(3,3)\} \\
\text{or} & & S = \{(1,3),(2,4),(3,2),(3,5),(4,3),(5,1),(5,4)\} \\
\text{or} & & S = \{(1,3),(2,4),(2,5),(3,2),(4,3),(4,4),(5,1)\} \\
\text{or} & & S = \{(1,3),(1,4),(2,1),(2,5),(3,2),(3,3),(4,4),(5,2)\} \\ 	
	\end{eqnarray*}
	
	For any choice of $S$ above, the square $(1,5)$ cannot be dominated without being forced to place a knight in every square in $KN_{n,m}.$ \\
	
	Case 3. 
	\[ S = \{ (1,1),(1,2),(2,4),(3,2),(3,3),(4,1),(4,5),(5,3)\} \]
	
	For $ m \leq 6,$ the square $(1,5)$ cannot be dominated without placing a knight in every square of $KN_{n,m}.$ However, for $m \geq 7,$ $S$ may be extended to dominate $(1,5)$ without placing a knight on every square. This is only possible by placing a knight on $(2,7).$ However, $S\bigcup\{(2,7)\}$ does not dominate $(5,5),$ and adding a knight to dominate $(5,5)$ necessitates a knight be placed on every square of $KN_{n,m}.$ 
	
	Therefore, there is no nontrivial perfect dominating set of $KN_{n,m}$ for $n,m \geq 5,$ that is, $\gamma_p(KN_{n,m}) = nm.$	
\end{proof}

Now of course $\gamma_p(KN_{n,1}) = n$ for all $n.$ It is also simple to show that $\gamma_p(KN_{6k+1,2}) = 4k+2$ for $k = 0,1,2,\dots$ and $\gamma_p(KN_{n,2}) = 4k$ for $6k - 4 \leq n \leq 6k$ and $k = 1,2,3,\dots.$ We record this fact as a proposition below. 

\begin{prop}
	$\gamma_p(KN_{6k+1,2}) = 4k + 2$ for $k = 0,1,2,\dots.$ 
	
	$\gamma_p(KN_{n,2}) = 4k$ for $6k - 4 \leq n \leq 6k$ and $k=1,2,3,\dots.$ 
\end{prop}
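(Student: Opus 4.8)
The plan is to exploit the very restricted adjacency of a two-row board to reduce the problem to perfect domination of disjoint paths, for which the answer is a clean ceiling function. Concretely, I would first show that $KN_{n,2}$ is a disjoint union of paths, then invoke two elementary facts: that $\gamma_p$ is additive over connected components, and that the path $P_\ell$ on $\ell$ vertices satisfies $\gamma_p(P_\ell)=\lceil \ell/3\rceil$. Assembling these gives a single closed formula in $n$, and substituting the relevant residues recovers both stated values.

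First I would pin down the edges. On a board with rows $\{1,2\}$, a move of the form $(a,b)\mapsto(a\pm1,b\pm2)$ is never legal since $b\pm2\notin\{1,2\}$; only the moves $(a,1)\leftrightarrow(a\pm2,2)$ and $(a,2)\leftrightarrow(a\pm2,1)$ survive. Because each such move changes the column by $\pm2$, it preserves the parity of the column index, so $KN_{n,2}$ splits, with no crossing edges, into the subgraph on odd columns and the subgraph on even columns. Within the odd columns, writing $B_t=(2t-1,1)$ and $T_t=(2t-1,2)$, the adjacencies $B_t\sim T_{t\pm1}$ and $T_t\sim B_{t\pm1}$ untangle into exactly two paths, $B_1\,T_2\,B_3\,T_4\cdots$ and $T_1\,B_2\,T_3\,B_4\cdots$, each visiting one square per odd column and hence having $\lceil n/2\rceil$ vertices; the even columns likewise give two paths, each with $\lfloor n/2\rfloor$ vertices. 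Thus $KN_{n,2}$ is the disjoint union of two copies of $P_{\lceil n/2\rceil}$ and two copies of $P_{\lfloor n/2\rfloor}$.

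For the path formula I would prove the lower bound by double counting: if $S$ is perfect dominating in $P_\ell$, then each of the $\ell-|S|$ vertices outside $S$ contributes exactly one edge to $S$, while each vertex of $S$ is incident to at most two such edges (its degree being at most $2$), giving $\ell-|S|\le 2|S|$ and hence $|S|\ge\lceil \ell/3\rceil$. For the matching upper bound I would exhibit the standard pattern placing a vertex in $S$ at every third position, shifting the phase according to $\ell \bmod 3$ so that no vertex is dominated twice. Using additivity over the four components then yields
\[ \gamma_p(KN_{n,2}) = 2\left\lceil \tfrac{1}{3}\left\lceil \tfrac{n}{2}\right\rceil\right\rceil + 2\left\lceil \tfrac{1}{3}\left\lfloor \tfrac{n}{2}\right\rfloor\right\rceil, \]
and I would finish by evaluating this for $n=6k+1$, where $\lceil n/2\rceil=3k+1$ and $\lfloor n/2\rfloor=3k$ give $2(k+1)+2k=4k+2$, and for each residue $n\in\{6k-4,\dots,6k\}$, where both ceilings evaluate to $k$ and give $4k$.

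I expect the only real care to lie in the bookkeeping of the path decomposition: verifying that the two interleaved paths in each parity class are genuinely vertex-disjoint, cover all $2n$ squares, and have the lengths claimed, including the degenerate cases (single-vertex paths when $\lceil n/2\rceil=1$ and the empty even class when $n=1$). The graph-theoretic and arithmetic ingredients afterward are routine; the one point worth flagging is that $n=6k+1$ is genuinely exceptional, since there $\lceil n/2\rceil=3k+1$ pushes one of the ceilings up by one, which is exactly the discrepancy the first formula records.
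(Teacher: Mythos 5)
Your proposal is correct and complete in outline. Note that the paper records this proposition without proof (it is introduced with ``it is also simple to show''), so there is no argument of the authors' to compare against; your write-up supplies exactly the kind of routine verification they presumably had in mind. The key steps all check out: knight moves on a two-row board change the column by exactly $2$ and alternate rows, so $KN_{n,2}$ decomposes into two copies of $P_{\lceil n/2\rceil}$ (odd columns) and two copies of $P_{\lfloor n/2\rfloor}$ (even columns); $\gamma_p$ is additive over components; and $\gamma_p(P_\ell)=\lceil \ell/3\rceil$ follows from your edge-counting lower bound together with the phase-shifted every-third-vertex constructions, which are in fact efficient dominating sets of the path. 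The resulting closed formula $\gamma_p(KN_{n,2}) = 2\left\lceil \frac{1}{3}\left\lceil \frac{n}{2}\right\rceil\right\rceil + 2\left\lceil \frac{1}{3}\left\lfloor \frac{n}{2}\right\rfloor\right\rceil$ evaluates to $4k+2$ when $n=6k+1$ and to $4k$ for $6k-4\le n\le 6k$, matching both claims (and, as a bonus, covering every $n$ in one expression, including the degenerate cases $n=1,2$ where some components are isolated vertices).
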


%
%
%
%

As a result, there are two interesting cases when dealing with finite boards - boards with 3 or 4 rows. We first look at boards with 3 rows. We give bounds for boards with the number of columns congruent to $0,4,5,6,$ or $7$ modulo 8. 

\begin{prop}\label{3xn}
	$\gamma_p(KN_{8k,3}) \leq 10k$ for $k = 1,2,3,\dots$
	
	$\gamma_p(KN_{8k+4,3}) \leq 10k + 6$ for $k = 0,1,2,\dots$
	
	$\gamma_p(KN_{8k+5,3}) \leq 10k + 6$ for $k = 0,1,2,\dots$
	
	$\gamma_p(KN_{8k+6,3}) \leq 10k + 7$ for $k = 0,1,2,\dots$
	
	$\gamma_p(KN_{8k+7,3}) \leq 10k + 9$ for $k = 0,1,2,\dots$	
\end{prop}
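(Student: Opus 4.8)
The plan is to prove these upper bounds constructively, since each statement asserts that $\gamma_p$ is \emph{at most} a given value, and an upper bound on the minimum requires only exhibiting a single perfect dominating set of the claimed size. First I would establish a repeating ``tile'' that perfectly dominates a $KN_{8,3}$ block using $10$ knights, since the coefficient $10k$ in the first bound strongly suggests a periodic pattern with period $8$ in the column direction contributing $10$ knights per period. I would explicitly write down the coordinates of these $10$ knights within an $8$-column, $3$-row region and verify two things: that every non-knight square in the interior of the tile sees exactly one knight, and—crucially—that the pattern is \emph{self-consistent across the seam} when tiles are concatenated, i.e. the domination counts remain exactly one for squares straddling the boundary between consecutive blocks. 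This seam-matching verification is the technical heart of the first bound.

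Once the periodic $8$-column tile is in hand, the remaining four statements would be handled by appending a suitable ``cap'' to $k$ copies of the base tile. For the $8k+4$ case I would adjoin a $4$-column terminal block using $6$ additional knights; for $8k+5$, a $5$-column block also using $6$; for $8k+6$, a $6$-column block using $7$; and for $8k+7$, a $7$-column block using $9$. In each case the construction is: take the perfect dominating set for the $8k$ portion, then add knights in the final columns so that (i) the newly added columns are perfectly dominated, (ii) the squares in the last column of the $8k$ block that previously relied on a now-absent continuation are repaired, and (iii) no square ends up doubly dominated. I would present each cap as an explicit small figure or coordinate list analogous to Figure \ref{S1}, with knights marked and all dominated squares accounted for.

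The key steps, in order, are: (1) define the base $8$-column tile and count its knights as $10$; (2) verify interior domination within one tile; (3) verify seam consistency between adjacent tiles, which gives the $8k$ bound by induction on $k$; (4) for each residue $4,5,6,7 \pmod 8$, design the terminal cap, count its knights, and verify that the junction between the last full tile and the cap preserves the perfect-domination property; (5) collect the knight totals ($10k+6$, $10k+6$, $10k+7$, $10k+9$) to match the stated bounds.

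The main obstacle I expect is the seam and junction verification in steps (3) and (4). Perfect domination is far more fragile than ordinary domination because \emph{every} square outside $S$ must see \emph{exactly} one knight, not at least one; a single square that sees two knights, or zero, breaks the whole construction. The boundary between a tile and its neighbor (or a tile and a cap) is exactly where knights from one block reach into the other, so these are precisely the squares where a count of $2$ can silently appear. I would therefore devote the most care to tabulating, for each boundary square, the exact set of knights within knight's-move distance and confirming the count is $1$. Because knights on a $3$-row board have a restricted move set—a knight in the middle row can reach only columns $\pm 1$ and cannot reach $\pm 2$ in a way that changes rows by $\pm 2$—I anticipate the vertical confinement actually simplifies this bookkeeping, making a direct exhaustive check over the finitely many distinct boundary-square types feasible rather than requiring a clever argument.
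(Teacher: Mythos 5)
Your proposal takes essentially the same approach as the paper: the paper's proof exhibits exactly this construction, namely an $8$-column, $10$-knight periodic tile repeated $k$ times together with terminal blocks of $4,5,6,7$ columns containing $6,6,7,9$ knights respectively, presented as explicit figures. The only differences are cosmetic—the paper places the cap block before rather than after the repeated tiles (equivalent by left-right symmetry), and your side remark that a middle-row knight on a $3$-row board reaches only columns $\pm 1$ is backwards (it reaches only columns $\pm 2$, since the $(\pm 1,\pm 2)$ moves leave the board)—neither of which affects the validity of the plan.
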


\begin{proof}
	
	For each of the bounds in Theorem \ref{3xn} we present the a placement of knights that can be extended indefinitely. 
	
	$\gamma_p(KN_{8k,3}) \leq 10k$ 
	
	\begin{center}
	\begin{tikzpicture}
	\foreach \i in {0,...,8} {
		\draw [very thick,black] (.5*\i-.25,-.25) -- (.5*\i-.25,1.25);
	}
	\foreach \i in {0,...,3} {
		\draw [very thick,black] (-.25,.5*\i-.25) -- (3.75,.5*\i-.25);
	}

	\draw node at (0,.5) {N};
	\draw node at (1,0) {N};
	\draw node at (1,.5) {N};
	\draw node at (1,1) {N};
	\draw node at (1.5,0) {N};
	\draw node at (2,.5) {N};
	\draw node at (2,1) {N};
	\draw node at (2.5,0) {N};
	\draw node at (3,0) {N};
	\draw node at (3,1) {N};
	
	\draw [decorate, decoration = {brace, mirror}] (-.25,-.5) -- (3.75,-.5) node [midway,yshift = -.4cm] {$k$ times};
	
	\end{tikzpicture}
	\end{center}

$\gamma_p(KN_{8k+4,3}) \leq 10k + 6$ 

\begin{center}
	\begin{tikzpicture}
	\foreach \i in {0,...,4} {
		\draw [very thick, black] (.5*\i+1.25,-.25) -- (.5*\i+1.25,1.25);
	}
	\foreach \i in {0,...,3} {
		\draw [very thick,black] (1.25,.5*\i-.25) -- (3.25,.5*\i-.25);
	}
	
	\draw node at (1.5,1) {N};
	\draw node at (2,.5) {N};
	\draw node at (2,1) {N};
	\draw node at (2.5,0) {N};
	\draw node at (3,0) {N};
	\draw node at (3,1) {N};

	\draw node at (3.75,.5) {+};

	\foreach \i in {0,...,8} {
		\draw [very thick,black] (.5*\i+4.25,-.25) --  (.5*\i+4.25,1.25);
	}
	\foreach \i in {0,...,3} {
		\draw [very thick,black] (4.25,.5*\i-.25) -- (8.25,.5*\i-.25);
	}

	\draw node at (5,.5) {N};
	\draw node at (6,0) {N};
	\draw node at (6,.5) {N};
	\draw node at (6,1) {N};
	\draw node at (6.5,0) {N};
	\draw node at (7,.5) {N};
	\draw node at (7,1) {N};
	\draw node at (7.5,0) {N};
	\draw node at (8,0) {N};
	\draw node at (8,1) {N};
	
	\draw [decorate, decoration = {brace, mirror}] (4.25,-.5) -- (8.25,-.5) node [midway,yshift = -.4cm] {$k$ times};
	
	\end{tikzpicture}
\end{center}

$\gamma_p(KN_{8k+5,3}) \leq 10k + 6$

\begin{center}
	\begin{tikzpicture}
	\foreach \i in {0,...,5} {
		\draw [very thick, black] (.5*\i+.75,-.25) -- (.5*\i+.75,1.25);
	}
	\foreach \i in {0,...,3} {
		\draw [very thick,black] (.75,.5*\i-.25) -- (3.25,.5*\i-.25);
	}
	
	\draw node at (1,1) {N};
	\draw node at (1.5,.5) {N};
	\draw node at (1.5,1) {N};
	\draw node at (2,0) {N};
	\draw node at (2.5,0) {N};
	\draw node at (2.5,1) {N};

	\draw node at (3.75,.5) {+};

	\foreach \i in {0,...,8} {
		\draw [very thick,black] (.5*\i+4.25,-.25) --  (.5*\i+4.25,1.25);
	}
	\foreach \i in {0,...,3} {
		\draw [very thick,black] (4.25,.5*\i-.25) -- (8.25,.5*\i-.25);
	}

	\draw node at (4.5,.5) {N};
	\draw node at (5.5,0) {N};
	\draw node at (5.5,.5) {N};
	\draw node at (5.5,1) {N};
	\draw node at (6,0) {N};
	\draw node at (6.5,.5) {N};
	\draw node at (6.5,1) {N};
	\draw node at (7,0) {N};
	\draw node at (7.5,0) {N};
	\draw node at (7.5,1) {N};
	
	\draw [decorate, decoration = {brace, mirror}] (4.25,-.5) -- (8.25,-.5) node [midway,yshift = -.4cm] {$k$ times};
	
	\end{tikzpicture}
\end{center}

	$\gamma_p(KN_{8k+6,3}) \leq 10k + 7$
	
	\begin{center}
		\begin{tikzpicture}
		\foreach \i in {0,...,6} {
			\draw [very thick, black] (.5*\i+.25,-.25) -- (.5*\i+.25,1.25);
		}
		\foreach \i in {0,...,3} {
			\draw [very thick,black] (.25,.5*\i-.25) -- (3.25,.5*\i-.25);
		}
		
		\draw node at (.5,0) {N};
		\draw node at (1,1) {N};
		\draw node at (1.5,.5) {N};
		\draw node at (1.5,1) {N};
		\draw node at (2,0) {N};
		\draw node at (2.5,0) {N};
		\draw node at (2.5,1) {N};

		\draw node at (3.75,.5) {+};

		\foreach \i in {0,...,8} {
			\draw [very thick,black] (.5*\i+4.25,-.25) --  (.5*\i+4.25,1.25);
		}
		\foreach \i in {0,...,3} {
			\draw [very thick,black] (4.25,.5*\i-.25) -- (8.25,.5*\i-.25);
		}

		\draw node at (4.5,.5) {N};
		\draw node at (5.5,0) {N};
		\draw node at (5.5,.5) {N};
		\draw node at (5.5,1) {N};
		\draw node at (6,0) {N};
		\draw node at (6.5,.5) {N};
		\draw node at (6.5,1) {N};
		\draw node at (7,0) {N};
		\draw node at (7.5,0) {N};
		\draw node at (7.5,1) {N};
		
		\draw [decorate, decoration = {brace, mirror}] (4.25,-.5) -- (8.25,-.5) node [midway,yshift = -.4cm] {$k$ times};
		
		\end{tikzpicture}
	\end{center}

	$\gamma_p(KN_{8k+7,3}) \leq 10k + 9$
	
		\begin{center}
			\begin{tikzpicture}
			\foreach \i in {0,...,7} {
				\draw [very thick, black] (.5*\i-.25,-.25) -- (.5*\i-.25,1.25);
			}
			\foreach \i in {0,...,3} {
				\draw [very thick,black] (-.25,.5*\i-.25) -- (3.25,.5*\i-.25);
			}
			
			\draw node at (.5,0) {N};
			\draw node at (.5,.5) {N};
			\draw node at (.5,1) {N};
			\draw node at (1,0) {N};
			\draw node at (1.5,.5) {N};
			\draw node at (1.5,1) {N};
			\draw node at (2,0) {N};
			\draw node at (2.5,0) {N};
			\draw node at (2.5,1) {N};

			\draw node at (3.75,.5) {+};

			\foreach \i in {0,...,8} {
				\draw [very thick,black] (.5*\i+4.25,-.25) --  (.5*\i+4.25,1.25);
			}
			\foreach \i in {0,...,3} {
				\draw [very thick,black] (4.25,.5*\i-.25) -- (8.25,.5*\i-.25);
			}

			\draw node at (4.5,.5) {N};
			\draw node at (5.5,0) {N};
			\draw node at (5.5,.5) {N};
			\draw node at (5.5,1) {N};
			\draw node at (6,0) {N};
			\draw node at (6.5,.5) {N};
			\draw node at (6.5,1) {N};
			\draw node at (7,0) {N};
			\draw node at (7.5,0) {N};
			\draw node at (7.5,1) {N};
			
			\draw [decorate, decoration = {brace, mirror}] (4.25,-.5) -- (8.25,-.5) node [midway,yshift = -.4cm] {$k$ times};
			
			\end{tikzpicture}
		\end{center}
	
\end{proof}

Now we turn our attention to boards with 4 rows. We will show that for boards with a large number of columns, there is exactly one nontrivial perfect dominating set, and such a set exists only for boards with an even number of columns. 

\begin{prop}\label{4_rows}
	$\gamma_p(KN_{2k,4}) = 4k$ for $k \geq 7.$ 
	$\gamma_p(KN_{2k+1,4}) = 4(2k+1)$ for $k \geq 6.$ 	
\end{prop}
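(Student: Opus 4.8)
The plan is to reduce both equalities to a single structural statement: for a $4$-row board that is long enough, a perfect dominating set $S$ is either all of $V$ or a uniquely determined set of size $2n$ (where $n$ is the number of columns), and the latter occurs if and only if $n$ is even. Since $V$ is vacuously a perfect dominating set, this immediately yields the two claims: for $n = 2k$ the minimum is the nontrivial set of size $4k$ (which is smaller than $|V| = 8k$), while for $n = 2k+1$ no nontrivial set exists, so the only perfect dominating set is $V$ and $\gamma_p = |V| = 4(2k+1)$.

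For the even case I would first establish the upper bound $\gamma_p(KN_{2k,4}) \le 4k$ by exhibiting an explicit perfect dominating set, presented as a repeating column block together with two fixed end blocks, in the same spirit as the constructions in Proposition \ref{3xn}; verifying that every square outside $S$ has exactly one neighbour in $S$ is then a finite, local check on the interior block and on the two ends. The construction must genuinely be periodic with period at least $4$: one checks directly from the domination equations that no period-$2$ column pattern can work on the strip, because for a square in row $1$ or $4$ the two same-parity neighbours at horizontal distance $2$ always contribute an even number of dominators, which can never equal $1$.

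The core of the proof is the matching lower bound together with uniqueness, which I would obtain by a forcing argument propagating from the left edge. Because the corner squares $(1,1),(1,4)$ have degree $2$ and the remaining column-$1$ and column-$2$ squares have small degree, the perfect-domination equations at the boundary sharply restrict the configuration of $S$ on the first two columns; enumerating the admissible boundary seeds (a finite check, suitable for computer verification) leaves only the configurations that can begin a nontrivial pattern. I would then show that the restriction of $S$ to columns $1,\dots,i$ determines its restriction to column $i+1$: phrased as a transfer rule on the state consisting of two consecutive columns, the perfect-domination conditions make the transition a partial function, so the entire set $S$ is forced and hence unique. Counting knights in the forced periodic pattern gives exactly $2n$.

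The main obstacle is the right-hand boundary, where the forced interior pattern must terminate consistently: the same degree-$2$ corner and low-degree edge conditions on the last two columns can be satisfied by the propagated pattern only when its phase aligns with the right edge, and tracking this phase shows the alignment is possible exactly when $n$ is even, while for odd $n$ the terminal equations are inconsistent, forcing a dominator on every square and hence $S = V$. Carrying out the boundary case analysis rigorously, and verifying that the two end blocks are compatible with all even lengths rather than only with multiples of the interior period, is the delicate part; the hypotheses $k \ge 7$ and $k \ge 6$ are exactly the lengths for which the left-forced and right-forced regions do not overlap and the count is clean, with the finitely many shorter boards checked separately by computer.
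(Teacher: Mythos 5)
This is essentially the paper's approach: both proofs are computer-assisted forcing arguments showing that on a sufficiently long $4$-row board the only candidate nontrivial perfect dominating set is the unique (up to symmetry) pattern with two knights per column, which is consistent with the far boundary exactly when the number of columns is even, so that odd boards admit only the trivial set. The paper implements the search by growing the set of knights dominating the middle $n-4$ columns, starting from $n=5$, until a single configuration survives at $n=13$ and all further placements are forced, rather than by propagating from the left edge; be aware that the search genuinely branches before that point (so your claim that the transfer rule is a partial function from the start is stronger than what is verified), and that a state of two consecutive columns is too small since knight moves span two files, but these are implementation details of the same idea.
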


\begin{proof}
	We again make use of a computer in this proof. Consider a set $S$ of knights which perfectly dominates the middle $n-4$ columns of $KN_{n,4}$ for arbitrary $n\geq 5.$ Starting at $n=5,$ we find all such possible sets $S$ which perfectly dominate the middle column of $KN_{5,4}.$ Then, we find all possible extensions of these sets to dominate the middle 2 columns of $KN_{6,4}.$ In this manner, we successively increase $n$ by $1$ and consider how to dominate the middle $n-4$ columns, using the possible sets of knights found in the previous step. When considering $KN_{13,4}$ there is exactly one set of knights (up to symmetry) which perfectly dominates the middle 9 columns (see Figure \ref{4xn} for a demonstration of this patter on $KN_{14,4}$). Extending this set of knights to a perfect dominating set of $KN_{13,4}$ forces a knight to be placed on every square. Moreover, if this placement of knights is put on any larger board (i.e., $KN_{n,4}$ with $n \geq 14$) in an attempt to find a perfect dominating set, all additional knights are forced according to this pattern. However, this pattern of knights only perfectly dominates $KN_{n,4}$ for even $n.$ Thus, there exists no nontrivial perfect dominating set of $KN_{n,4}$ for odd $n \geq 13.$ 
\end{proof}
	
		\begin{center}
			\begin{figure}[H]
			\begin{tikzpicture}
			\foreach \i in {0,...,14} {
				\draw [very thick,black] (.5*\i-.25,-.25) -- (.5*\i-.25,1.75);
			}
			\foreach \i in {0,...,4} {
				\draw [very thick,black] (-.25,.5*\i-.25) -- (6.75,.5*\i-.25);
			}

			\draw node at (0,0) {N};
			\draw node at (0,1) {N};
			\draw node at (.5,.5) {N};
			\draw node at (.5,1.5) {N};
			\draw node at (1,.5) {N};
			\draw node at (1,1.5) {N};
			\draw node at (1.5,0) {N};
			\draw node at (1.5,1) {N};
			\draw node at (2,0) {N};
			\draw node at (2,1) {N};
			\draw node at (2.5,.5) {N};
			\draw node at (2.5,1.5) {N};
			\draw node at (3,.5) {N};
			\draw node at (3,1.5) {N};
			\draw node at (3.5,0) {N};
			\draw node at (3.5,1) {N};
			\draw node at (4,0) {N};
			\draw node at (4,1) {N};
			\draw node at (4.5,.5) {N};
			\draw node at (4.5,1.5) {N};
			\draw node at (5,.5) {N};
			\draw node at (5,1.5) {N};
			\draw node at (5.5,0) {N};
			\draw node at (5.5,1) {N};
			\draw node at (6,0) {N};
			\draw node at (6,1) {N};			
			\draw node at (6.5,.5) {N};
			\draw node at (6.5,1.5) {N};			
						
			\end{tikzpicture}
			\caption{Perfect Dominating set of $KN_{14,4}$}
			\label{4xn}			
			\end{figure}
		\end{center}

The following are results quickly computed using the algorithm described in the proof of Proposition \ref{4_rows}, and are given without proof. 
\begin{prop}
	$\gamma_p(KN_{4,4}) = \gamma_p(KN_{5,4}) = \gamma_p(KN_{6,4}) = 8$
	
	$\gamma_p(KN_{7,4}) = 28$
	
	$\gamma_p(KN_{8,4}) = 16 $
	
	$\gamma_p(KN_{9,4}) = 36$
	
	$\gamma_p(KN_{10,4}) = \gamma_p(KN_{11,4}) = \gamma_p(KN_{12,4}) = 16$	
\end{prop}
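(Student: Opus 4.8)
The plan is to prove each equality by the standard two-sided argument: an upper bound from an explicit perfect dominating set, and a matching lower bound certifying that nothing smaller exists. Because the claimed values are irregular in $n$ (they jump from $8$ to $16$ between $n=6$ and $n=8$, stay at $16$ through $n=12$, and equal the full order $nm$ at $n=7$ and $n=9$), no single uniform pattern will do; I would treat the nine boards as nine separate finite certificates that happen to share one computational engine.

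For the upper bounds I would exhibit a concrete set of knights in each nontrivial case and verify it by inspection. For $n=4,5,6$ a set of size $8$ suffices, and for $n=8,10,11,12$ a set of size $16$ suffices. Only the case $n=8$ is covered by the repeating block of Figure \ref{4xn} (two knights per column, total $2n$); for $n=10,11,12$ that block costs $2n>16$, and it fails outright for the odd length $n=11$ since it perfectly dominates only for even $n$, so a genuinely more economical ad hoc configuration must be displayed and checked for each. The two cases $n=7$ and $n=9$ are different in kind: here the claim is $\gamma_p=nm$, and the upper bound is automatic because $S=V$ is always a perfect dominating set (the defining condition quantifies over $V-S=\emptyset$ and holds vacuously), giving $\gamma_p(KN_{7,4})\le 28$ and $\gamma_p(KN_{9,4})\le 36$.

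For the lower bounds I would run the column-sweep (transfer-matrix) search already described in the proof of Proposition \ref{4_rows}. Since a knight on column $i$ reaches only columns $i-2,\dots,i+2$, the domination status of every square in column $i$ is finalized once the knights of columns $i+1$ and $i+2$ are fixed; hence a finite state — the knight placements in the last two columns together with the running count of $S$-neighbors already accrued by each not-yet-finalized square (capped, since a count of two already kills a square outside $S$) — suffices to propagate the constraint left to right, with the exactly-one condition enforced at the moment each column leaves the window. With only four rows each column has $2^4=16$ placements, so the state space is small and completely enumerable. Sweeping with empty boundary columns to the left of column $1$ and to the right of column $n$, and recording for each reachable state the minimum number of knights used, returns the exact minimum cardinality of a perfect dominating set, hence the matching lower bound for every $n$. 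As an independent, hand-checkable consistency test I would use the partition identity that perfect domination forces, namely that the sets $N(v)\cap(V-S)$ for $v\in S$ partition $V-S$, so that $\sum_{v\in S}\deg(v)-2e(S)=nm-|S|$ with $e(S)$ the number of edges inside $S$.

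The main obstacle is entirely on the lower-bound side: producing a certificate that the search is exhaustive and correctly handles the board boundaries, since the non-monotone behaviour means there is no shortcut valid for all $n$ at once. The two cases where the answer is the full order, $n=7$ and $n=9$, require the most care, as one must rule out every nontrivial configuration — exactly the forcing cascade that drives the proof of Theorem \ref{5x5} — rather than merely exhibiting one good set; here the identity above only gives the weak bound $|S|\ge nm/7$, so the real work is a forcing argument showing that any attempt to leave a square out of $S$ propagates into placing a knight on every square. The even cases $n=8,10,12$ and the exceptional odd case $n=11$ are comparatively routine once the transfer-matrix enumeration is trusted, but a fully by-hand write-up (rather than a citation of the computation) would mean recording, for each board, the finite list of states the sweep visits and the minimum-weight accepting path.
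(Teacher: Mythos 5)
Your plan is essentially the paper's own approach: the paper states these values ``without proof'' as outputs of the column-by-column enumeration from Proposition \ref{4_rows}, and your transfer-matrix sweep is exactly that enumeration made precise, supplemented with explicit upper-bound certificates and the vacuous $S=V$ bound for $n=7,9$. The method is sound and the details you add (finite state space, the degree bound $|S|\ge nm/7$ as a sanity check) are correct, so this matches the paper's argument rather than replacing it.
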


\section{Infinite Boards}

We now turn our attention to infinite chessboards. Following \cite{Sinko}, we look at five different infinite chessboards. The notation used and a brief description of each infinite board is given below.  
\begin{itemize}
	\item $KN_{\mathbb{Z},\mathbb{Z}}$: chessboard without boundaries 	
	\item $KN_{\mathbb{Z},m}$: finitely many rows, infinitely many  columns 
	\item $KN_{\mathbb{N},m}$: finitely many rows, infinitely many columns, opening to the right 
	\item $KN_{\mathbb{N},\mathbb{N}}$: infinitely many rows, infinitely many columns, one corner 
	\item $KN_{\mathbb{Z},\mathbb{N}}$: infinitely many rows, infinitely man columns, one horizontal boundary 		
\end{itemize}

For each of the infinite chessboards we ask the following two questions: ``Does a nontrivial perfect dominating set exist?" and, if so, ``What is minimum proportion of the squares containing knights in such a set?" The second questions is analogous to finding the nontrivial perfect dominating set of minimum cardinality on a finite chessboard. For convenience, we refer to the proportion of squares containing knights as the \textit{density} of the set of knights. We can, in fact, answer both questions for nearly each infinite board. We start with  $KN_{\mathbb{Z},\mathbb{Z}}.$  

\begin{prop}
	There exists a nontrivial perfect dominating set of $KN_{\mathbb{Z},\mathbb{Z}}$ with knights populating $1/8$ of the board. Moreover, $1/8$ is the minimum density of a perfect dominating set.
\end{prop}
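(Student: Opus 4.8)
The plan is to prove the two halves separately: first construct an explicit perfect dominating set of $KN_{\mathbb{Z},\mathbb{Z}}$ of density $1/8$, and then show that no perfect dominating set can have smaller density.

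For the upper bound I would exhibit a doubly periodic pattern in which the knights occur in \emph{pairs} a knight's-move apart, with these pairs sitting on a sublattice $L \le \mathbb{Z}^2$ of index $16$. Fixing one pair, say $\{(0,0),(1,2)\}$, its two knights have $16$ neighboring squares; two of these neighbor-vectors land back on the knights $(0,0)$ and $(1,2)$ themselves, while the other $14$ are empty, and a direct check shows the two knights share no common empty neighbor (so no square is doubly dominated \emph{within} a pair). Thus each pair contributes $2$ knights and privately dominates $14$ empty squares, a local census of $16$ squares containing exactly $2$ knights, i.e.\ density $1/8$. To verify that the $L$-translates of this configuration form a genuine perfect dominating set it suffices to check that the $16$ neighbor-vectors of the pair form a complete set of residues modulo $L$: then every square has exactly one representation as (lattice vector) $+$ (neighbor-vector), so every empty square is dominated exactly once. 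This is a finite computation, and the $4$-row ``domino'' pattern of Figure~\ref{4xn} is the analogous phenomenon in a strip, suggesting the same pairing on the whole plane.

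For the lower bound, write $\delta$ for the density of the knight set $S$ and recall that every vertex of $KN_{\mathbb{Z},\mathbb{Z}}$ has degree $8$. Counting incidences between empty squares and their unique dominators inside a large box and letting the box grow (absorbing the $O(N)$ boundary terms) yields the identity
\[ e_{KK} \;=\; 9\delta - 1, \]
where $e_{KK}$ denotes the density of ordered adjacent pairs of knights, equal to $\delta$ times the average number of knight-neighbours of a knight. The crude bound $e_{KK}\ge 0$ gives only $\delta \ge 1/9$, which is precisely the density an efficient dominating set would have; so the real content is the improvement to $1/8$. Since $9\delta-1\ge\delta$ is equivalent to $\delta\ge 1/8$, it suffices to prove the structural statement that, on average, each knight has at least one knight-neighbour (equivalently $e_{KK}\ge\delta$).

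The main obstacle is exactly this structural step: ruling out \emph{isolated} knights. I would argue locally. If a knight $s$ had no knight-neighbour, then all eight of its neighbours would be empty and privately dominated by $s$; consequently any square adjacent to two of those eight neighbours cannot itself be a knight, since it would doubly dominate one of them, and this forces a thick shell of further empty squares around $s$. Pushing outward, one shows that the squares of the next shell cannot each acquire a \emph{unique} dominator without placing two knights adjacent to a common empty square, a contradiction. Because this is a finite, if intricate, analysis of the fixed bounded neighbourhood of $s$, it is the kind of local case-check amenable to the computer searches used elsewhere in the paper. Once every knight is shown to have a knight-neighbour, the displayed identity gives $\delta\ge 1/8$, matching the construction. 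Two further points require care: making the box-counting rigorous by working with upper and lower densities, and confirming that $1/9$ is genuinely unattainable, i.e.\ that $KN_{\mathbb{Z},\mathbb{Z}}$ admits no efficient dominating set, so that the gap up to $1/8$ is real rather than an artifact of the crude estimate.
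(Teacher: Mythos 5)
Your architecture coincides with the paper's own proof: an explicit doubly periodic construction with knights in adjacent pairs for the upper bound, the key structural lemma that no perfect dominating set of $KN_{\mathbb{Z},\mathbb{Z}}$ can contain an isolated knight, and then a counting argument. Your identity $e_{KK} = 9\delta - 1$ and the reduction of $\delta \ge 1/8$ to $e_{KK} \ge \delta$ are correct, and are in fact a more careful rendering of the paper's closing count. The genuine gap is that you never prove the structural lemma, and that lemma is where essentially all the work lies. Your sketch of it is too optimistic: after the forced region of empty squares around an isolated knight (your first observation is sound, though adjacency to even \emph{one} of the eight dominated neighbours already forbids a knight, not two), the contradiction does not come from a single ``next shell'' count. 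The paper must branch on where a second knight sits among $(\pm 2,\pm 2)$, then on which of three squares can dominate $(0,1)$, and inside each branch chase forced knights $N_4,\dots,N_8$ out to distance roughly five from the origin before every branch closes; one subcase (Case 3b there) dies only after four further forced placements. So what you call a ``finite, if intricate, analysis'' is a multi-level forcing tree whose radius you have not bounded; without either the explicit case analysis or a proven bound on the region together with an actual exhaustive search, the lower bound remains an assertion.

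The upper-bound half is also incomplete as written. Your tiling criterion --- that the $16$ squares covered by a knight pair form a complete set of residues modulo an index-$16$ sublattice $L$ --- is exactly right, but you never exhibit $L$, and the existence of such a lattice is precisely what needs checking. It does exist: the pattern of Figure \ref{infinite_1} is the set $\{(0,0),(1,0)\} + L$ with $L = \langle (2,-2),(3,5) \rangle$, and in it each knight is adjacent (by a knight's move) to exactly one other knight, matching your pairing description. Finally, your closing worry about separately ruling out density $1/9$ (nonexistence of an efficient dominating set) is superfluous: once isolated knights are excluded, every knight has a knight-neighbour, so $e_{KK} \ge \delta$ and $\delta \ge 1/8$ follow at once from your identity, with no intermediate value to eliminate.
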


\begin{proof}
	Figure \ref{infinite_1} gives an example of a perfect dominating set with density $1/8.$ 	We must now show that $1/8$ is the minimum density of a perfect dominating set of $KN_{\mathbb{Z},\mathbb{Z}}.$ We do so by showing that no such set can contain an isolated knight. So suppose we have  a nontrivial perfect dominating set, and for sake of contradiction, assume there exists an isolated knight.  WLOG call this knight $N_1$ and set it in square (0,0) to be the origin. 

	\begin{figure}[H]
		\begin{tikzpicture}
		\foreach \i in {0,...,10} {
			\draw [very thick,black] (.5*\i-2.25,-2.75) -- (.5*\i-2.25,2.75);
		}
		\foreach \i in {0,...,9} {
			\draw [very thick,black] (-2.75,.5*\i-2.25) -- (3.25,.5*\i-2.25);
		}
		
		\draw node at (0,0) {N};	
		\draw node at (.5,0) {N};	
		\draw node at (1,-1) {N};	
		\draw node at (1.5,-1) {N};		
		\draw node at (-.5,1) {N};	
		\draw node at (-1,1) {N};		
		\draw node at (-1.5,2) {N};	
		\draw node at (-2,2) {N};	
		\draw node at (-1,-2.5) {N};		
		\draw node at (-1.5,-2.5) {N};
		\draw node at (-2,-1.5) {N};
		\draw node at (-2.5,-1.5) {N};	
		\draw node at (1.5,2.5) {N};		
		\draw node at (2,2.5) {N};		
		\draw node at (2.5,1.5) {N};		
		\draw node at (3,1.5) {N};			
		\draw node at (2,-2) {N};
		\draw node at (2.5,-2) {N};
		\end{tikzpicture}
		\caption{A perfect dominating set of $KN_{\mathbb{Z},\mathbb{Z}}$ with density $1/8.$}
		\label{infinite_1}
	\end{figure}
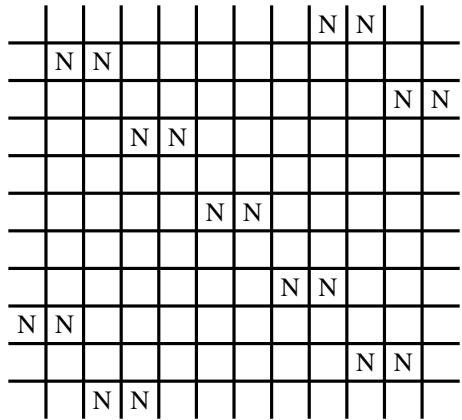

	Because $N_1$ must be isolated our board locally takes the form shown in Figure \ref{N_1}, where squares with an $X$ may not contain a knight. Note that if any square $(a,b)$ with an X in Figure \ref{N_1} is dominated, then no other square in the closed neighborhood of $(a,b)$	may be a knight. We make use of this fact extensively in the following argument. 	
	
		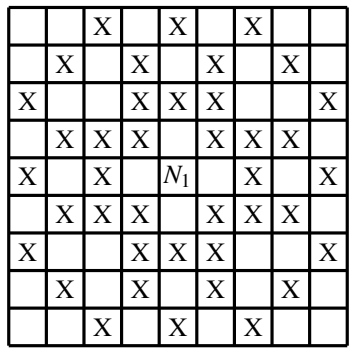
\begin{figure}[H]
		\begin{tikzpicture}
		\foreach \i in {0,...,9} {
			\draw [very thick,black] (.5*\i-2.25,-2.25) -- (.5*\i-2.25,2.25);
		}
		
		\foreach \i in {0,...,9} {
			\draw [very thick,black] (-2.25,.5*\i-2.25) -- (2.25,.5*\i-2.25);
		}
		
		\draw node at (0,0) {$N_1$};
		\draw node at (0,1) {X};
		\draw node at (0,2) {X};
		\draw node at (0,-2) {X};
		\draw node at (0,-1) {X};
		\draw node at (-2,1) {X};
		\draw node at (-2,0) {X};
		\draw node at (-2,-1) {X};
		\draw node at (-1.5,1.5) {X};
		\draw node at (-1.5,0.5) {X};
		\draw node at (-1.5,-0.5) {X};
		\draw node at (-1.5,-1.5) {X};
		\draw node at (-1,2) {X};
		\draw node at (-1,.5) {X};
		\draw node at (-1,0) {X};
		\draw node at (-1,-.5) {X};
		\draw node at (-1,-2) {X};
		\draw node at (-.5,1.5) {X};
		\draw node at (-.5,1) {X};
		\draw node at (-.5,.5) {X};
		\draw node at (-.5,-1.5) {X};
		\draw node at (-.5,-1) {X};
		\draw node at (-.5,-.5) {X};
		\draw node at (2,1) {X};
		\draw node at (2,0) {X};
		\draw node at (2,-1) {X};
		\draw node at (1.5,1.5) {X};
		\draw node at (1.5,0.5) {X};
		\draw node at (1.5,-0.5) {X};
		\draw node at (1.5,-1.5) {X};
		\draw node at (1,2) {X};
		\draw node at (1,.5) {X};
		\draw node at (1,0) {X};
		\draw node at (1,-.5) {X};
		\draw node at (1,-2) {X};
		\draw node at (.5,1.5) {X};
		\draw node at (.5,1) {X};
		\draw node at (.5,.5) {X};
		\draw node at (.5,-1.5) {X};
		\draw node at (.5,-1) {X};
		\draw node at (.5,-.5) {X};		
		\end{tikzpicture}
		\caption{Squares which cannot contain a knight so that $N_1$ remains isolated}
		\label{N_1}
	\end{figure}

Suppose that a second knight $N_2$ is in either $(2,2), (2,-2), (-2,2),$ or $(-2,-2).$ WLOG, assume $N_2$ is in $(2,-2).$ Now we must still dominate $(0,1),$ and must do so with a knight in either $(2,2), (0,1), $ or $(-2,2).$ \\
Case 1. 	$N_3 = (2,2).$ 

Note that this forces knights $N_4 = (1,0)$ and $N_5 = (3,0).$ We claim that $(2,0)$ cannot be dominated. Suppose $N_6$ dominates $(2,0),$ then from Figure \ref{N_1}, $N_6 \in \{ (0,1),(3,2),(4,1),(4,-1),(3,-2),(0,-1)  \}.$ But note that $N_4$ dominates $(-1,1),$ $(3,1),$ and $(3,-1)$ so that $N_6 \neq (0,-1),$ $N_6 \neq (4,-1),$ and $N_6 \neq (4,1)$ respectively. Moreover, $N_5$ dominates $(1,-1)$ and $(1,1)$ so that $N_6 \neq (0,1), N_6 \neq (3,-2),$ and $N_6 \neq (3,2)$ respectively.  This implies $(2,0)$ cannot be dominated. Thus $N_3 \neq (2,2).$  \\
Case 2. $N_3 = (0,1).$ 
	
Consider choosing $N_4$ and $N_5$ to dominate $(1,1)$ and $(-1,1)$ respectively. Then we must have that $N_4 \in \{ (0,3),(2,3),(3,2),(3,0),(0,-1),(-1,0)  \}$ and also that $N_5 \in \{ (0,3),(1,0),(0,-1),(-3,0),(-3,2),(-2,3)  \}.$ Now note that $N_3$ dominates $(1,-1)$ so that $(3,0)$ cannot be a knight. Further, $N_2$ dominates $(3,0)$ so that $(2,2)$ cannot be a knight. But then $N_3$ dominates $(2,2)$ so that $(0,3)$ and $(1,0)$ cannot be knights. Moreover, we have that $N_3$ dominates $(1,3), (2,0), (-1,0)$ which implies that $N_4 \neq (3,2), N_4 \neq (0,-1),$ and $N_4 \neq (-1,0).$ Thus $N_4 = (2,3).$ On the other hand, $N_3$ dominates $(2,0), (-1,-1),$ and $(-1,3)$ so that $N_5 \neq (0,-1), N_5 \neq (-3,0)$ and $N_5 \neq (-3,2).$ Thus $N_5 = (-2,3).$ But note that $N_4$ and $N_5$ dominate $(0,2),$ so $(0,2)$ must be a knight. However, this is a contradiction for $(0,2)$ cannot be a knight. Thus, $N_3 \neq (0,1).$ \\
Case 3. $N_3 = (-2,2).$ 
	
Consider dominating $(1,-1)$ with a knight $N_4.$ It must be the case that $N_4 \in \{ (0,1),(3,0),(3,-2),(2,-3),(0,-3),(-1,0)  \},$ but by symmetry we only need to consider $N_4 \in \{ (0,1),(3,0),(3,-2)  \}.$ Moreover, from Case 2, we know that having $(0,0), (2,-2),$ and $(0,1)$ as knights leads to a contradiction. So we consider just $N_4 \in \{(3,0), (3.-2)\}. $\\
Case 3a. $N_4 = (3,0).$

Now consider dominating $(2,0)$ with a knight $N_5.$ Now this implies that $N_5 \in \{ (0,1),(0,-1),(3,2),(3,-2),(4,1),(4,-1)  \}.$ But $N_4$ dominates $(1,1)$ so $N_5 \neq (0,-1)$ and $N_5 \neq (3,2).$ Also $N_4$ dominates $(1,-1)$ so $N_5 \neq (0,1)$ and $N_5 \neq (3,-2).$ Moreover, $N_3$ dominates $(0,1),$ so $(2,2)$ cannot be a knight. But $(2,2)$ is dominated by $N_4$ so $(4,1)$ cannot be a knight. Thus we must have $N_5 = (4,-1).$ 

But now in order to dominate $(4,0)$ with a knight $N_6,$ we must have $N_6 \in \{ (3,2),(3,-2),(6,1),(6,-1) \}.$ But $N_4$ dominates $(4,2)$ and $(4,-2)$ so $N_6 \neq (6,1)$ and $N_6 \neq (6,-1)$ respectively. Moreover, $N_5$ dominates $(3,1)$ and $(3,-3)$ so that $N_6 \neq (5,2)$ and $N_6 \neq (5,-2)$ respectively. Hence $(4,0)$ cannot be dominated. Thus $N_4 \neq (3,0).$ \\
Case 3b. $N_4 = (3,-2).$ 

First note that from our argument in Case 2, $(1,0), (-1,0), (0,1),$ and  $(0,-1)$ cannot be knights. Now consider dominating $(3,-1),$ with a knight $N_5.$ Then $N_5 \in \{(2,-3), (4,1),(4,-3), (5,0),(5,-2) \}.$ Now $N_4$ dominates $(1,-1), (2,0), (2,-4),$ and $(4,0),$ so $N_6 \neq (2,-3),$ $N_6 \neq (4,1),$ $N_6 \neq (4,-3),$ and $N_6 \neq (5,-2)$ respectively. Thus $N_5 = (5,0).$ 

Now consider dominating $(1,1)$ with a knight $N_6.$ Then we must have $N_6 \in  \{ (0,3),(2,3),(3,2),(3,0) \}.$ But note $N_4$ dominates $(2,0)$ and $(1,-1)$ so $N_6 \neq (3,2),$ $N_6 \neq (3,0).$ Moreover, $N_5$ dominates $(4,2),$ so $N_6 \neq (2,3).$ Thus $N_6 = (0,3).$ 

Now consider dominating $(3,-3)$ with a knight $N_7.$ This implies that $N_7 \in \{ (4,-1), (5,-4), (4,-5), (2,-5), (1,-4)  \}.$ Now $N_4$ dominates $(2,0), (2,-4),$ and $(1,-3)$  so $N_7 \neq (4,-1),$ $N_7 \neq (4,-5)$ and $N_7 \neq (2,-5).$ Moreover, $N_5$ dominates $(4,-2)$ so $N_7 \neq (5,-4).$ Thus $N_7  = (1,-4).$ 

Lastly, consider dominating $(-1,-1)$ with a knight $N_8$ and note that we have $N_8 \in \{ (0,-3), (-2,-3), (-3,-2), (-3,0)  \}.$ Now $N_4$ dominates $(2,-4)$ so $N_8 \neq (0,-3).$ Also $N_7$ dominates $(0,-2)$ and $(-1,-3),$ so $N_8 \neq (-2,-3)$ and $N_8 \neq (-3,-2).$ Moreover, $N_6$ dominates $(-1,1)$ so $N_8 \neq (-3,0).$ Therefore, $(-1,-1)$ cannot be dominated. Thus $N_4 \neq (3,-2).$ 

Therefore, there is no $N_4$ which we can choose to dominate $(1,-1).$ Hence $N_3 \neq (-2,2).$ It follows that given $N_1 = (0,0),$ $N_2 = (2,-2),$ we cannot find $N_3$ to dominate $(0,1).$ Therefore, WLOG any nontrivial perfect dominating set with an isolated knight $(0,0),$ cannot contain a knight in $(2,-2), (2,2), (-2,2),$ or $(-2,-2).$

So now consider the case that $N_1 = (0,0)$ and each of $(2,2), (2,-2),$ $(-2,2),$ and $(-2,-2)$ are not knights. Then we must have that $(0,1)$ and $(1,0)$ are dominated by themselves. However, placing knights in $(0,1)$ and $(1,0)$ forces a knight in $(2,2).$ Which is a contradiction. 

Therefore, any perfect dominating set of $KN_{\mathbb{Z},\mathbb{Z}}$ cannot contain an isolated knight.

It follows then that a perfect dominating set of minimum density is attained when each knight is adjacent to exactly one other knight. In such case, any given knight uniquely dominates 8 squares (including itself). Hence the knights populate $1/8$ of all squares.  	
\end{proof}

\begin{prop}\label{infinite bands}
	$KN_{\mathbb{Z},m}$ and $KN_{\mathbb{N},m}$ contain nontrivial perfect dominating sets if and only if $m = 2,3,4.$ Moreover, these perfect dominating sets have minimum densities of $1/3,$ (at most) $5/12,$ and $1/2,$ respectively. 
\end{prop}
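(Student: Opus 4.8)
The plan is to split the statement into three parts: (i) explicit periodic constructions giving, for $m=2,3,4$, nontrivial perfect dominating sets of the stated densities (this settles the ``if'' direction together with the upper bounds $1/3$, $5/12$, $1/2$); (ii) nonexistence of a nontrivial perfect dominating set for $m=1$ and for $m\ge 5$ (the ``only if'' direction); and (iii) matching density lower bounds for $m=2$ and $m=4$. No lower bound is needed when $m=3$, which is exactly why the proposition only claims ``at most $5/12$,'' consistent with the unresolved residues modulo $8$.

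For part (i) I would recycle the finite constructions already in hand. The two-row proposition gives $\gamma_p(KN_{6k,2})=4k$, a density of $4k/12k=1/3$; Proposition \ref{3xn} gives $\gamma_p(KN_{8k,3})\le 10k$, a density of $10k/24k=5/12$; and Proposition \ref{4_rows} gives $\gamma_p(KN_{2k,4})=4k$, a density of $4k/8k=1/2$, realized by the pattern of Figure \ref{4xn}. In each case I would tile $KN_{\mathbb{Z},m}$ bi-infinitely, and $KN_{\mathbb{N},m}$ by starting the same periodic block at column $1$, and then check that periodicity makes the tiling perfectly dominating: the top and bottom rows are handled within the block, and every interior square inherits the already-verified local pattern. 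Since each construction is nontrivial, this establishes existence for $m=2,3,4$ together with the three density upper bounds.

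For part (ii), the case $m=1$ is immediate: a single row admits no knight move, so $KN_{\mathbb{Z},1}$ and $KN_{\mathbb{N},1}$ are edgeless and their only perfect dominating set is all of $V$. For $m\ge 5$ on $KN_{\mathbb{N},m}$ the lower-left corner $(1,1)$ is present, so the corner-anchored forcing of Theorem \ref{5x5} applies verbatim: dominating the lower-left $3\times 3$ block forces knights throughout the now-infinite board, so only the trivial set survives. The genuinely delicate case is $KN_{\mathbb{Z},m}$ with $m\ge 5$, which has no corner at which to anchor the $13$-case analysis. Here I would first observe that any window of at least five consecutive all-empty columns contains an undominated square, so knights recur in every sufficiently long column-window; I would then re-run the forcing of Theorem \ref{5x5} anchored at a $3\times 3$ block sitting on the bottom edge (rows $1$--$3$) rather than at a corner, enlarging the case list to compensate for the lost left-edge symmetry and propagating the forced filling along the strip. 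I expect this cornerless forcing to be the main obstacle, since dropping the left edge removes both the symmetry reduction and the extra corner constraints that kept the finite analysis short; a computer-assisted enumeration, exactly as used for the finite boards, is the natural tool.

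For part (iii) the lower bound when $m=2$ is clean: every vertex of $KN_{\mathbb{Z},2}$ has degree $2$, so the graph is a disjoint union of two-way infinite paths. On such a path, a maximal run of non-chosen vertices of length at least $3$ has an interior vertex with no chosen neighbor, so every empty run has length at most $2$; since consecutive empty runs are separated by at least one chosen vertex, the chosen fraction on each component, and hence on the whole graph, is at least $1/3$. Combined with part (i) this pins the minimum density for $m=2$ at exactly $1/3$. For $m=4$ I would invoke the uniqueness established in the proof of Proposition \ref{4_rows}: the forcing there shows that the nontrivial perfect dominating set of a large four-row board is unique up to symmetry and coincides with the Figure \ref{4xn} pattern, so its bi-infinite extension is the only nontrivial perfect dominating set of $KN_{\mathbb{Z},4}$ and has density exactly $1/2$. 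For $m=3$ no lower bound is asserted, so the construction alone suffices and the bound remains ``at most $5/12$.''
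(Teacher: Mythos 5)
Your proposal matches the paper's proof in all essentials: the same periodic extensions of the finite $m=2,3,4$ patterns give existence and the density bounds $1/3$, $5/12$, $1/2$; the corner forcing of Theorem \ref{5x5} rules out $KN_{\mathbb{N},m}$ for $m\ge 5$; the $m=4$ optimality comes from the uniqueness in Proposition \ref{4_rows}; and your planned computer-assisted enumeration anchored on a 3-row boundary strip for $KN_{\mathbb{Z},m}$, $m\ge 5$, is precisely what the paper does (it enumerates all ways to perfectly dominate a 3-row, $k$-column boundary sub-board, growing $k$ from $4$, and finds every branch forces the trivial set by $k=12$). The only cosmetic difference is your $m=2$ lower bound via the path decomposition of the 2-regular graph $KN_{\mathbb{Z},2}$, where the paper instead observes that its construction is an efficient dominating set; both yield the same density $1/3$.
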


\begin{proof}
	That $KN_{\mathbb{N},m}$ has no nontrivial perfect dominating set for $m \geq 5$ follows from our proof Theorem \ref{5x5}. For $2 \leq m \leq 4,$ nontrivial perfect dominating sets for $KN_{\mathbb{N},m}$ can be constructed by extending the patterns in the perfect dominating sets for $KN_{n,2}, KN_{8n,3},$ and $KN_{2n,4}.$ Moreover, using these patterns gives dominating sets of densities $1/3,$ $5/12,$ and $1/2$ respectively. The density of $1/3$ for $KN_{\mathbb{N},2}$ is optimal since the perfect dominating set is an efficient dominating set. The density of $1/2$ for $KN_{\mathbb{N},4}$ is also optimal, as there is a unique (up to symmetry) perfect dominating set for $KN_{\mathbb{N},4}$ per the proof of Proposition \ref{4_rows}. The density of $5/12$ for $KN_{\mathbb{N},3}$ has only been shown to necessitate an upper bound.  
	
	Furthermore, these patterns can actually be extended in either direction to give nontrivial perfect dominating sets of $KN_{\mathbb{Z},2}, KN_{\mathbb{Z},3},$ and $KN_{\mathbb{Z},4}.$ The corresponding optimal densities of knights remain the same. 
	
	To show that $KN_{\mathbb{Z},m}$ has no nontrivial perfect dominating set for $m \geq 5,$ we consider dominating a sub-board on the boundary with 3 rows and $k$ columns. Starting with $k=4,$ we constructed all sets of knights which were necessary to perfectly dominate only this sub-board. We successively increased $k$ by $1$ and extended each of our constructions as necessary to dominate the three additional squares. When $k=12,$ none of the constructions had a nontrivial extension. Thus, since a perfect dominating set of $KN_{\mathbb{Z},m}$ must perfectly dominate this sub-board of 3 rows and 12 columns on the boundary, and any such set necessitates placing knights on the entire board, $KN_{\mathbb{Z},m}$ has no nontrivial perfect dominating set. 	
\end{proof}

\begin{prop}
	There is no nontrivial perfect dominating set for $KN_{\mathbb{N},\mathbb{N}}$ or $KN_{\mathbb{Z}, \mathbb{N}}.$ 
\end{prop}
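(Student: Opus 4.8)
The plan is to reduce each board to a setting already handled, exploiting the fact that the forcing arguments we have available are \emph{local}: they reach the conclusion ``a knight must be placed on every square'' using only squares lying within a bounded distance of a corner or of a straight boundary. Since $KN_{\mathbb{N},\mathbb{N}}$ reproduces exactly such a corner and $KN_{\mathbb{Z},\mathbb{N}}$ reproduces exactly such a boundary, the same forcing applies; and because both boards are infinite, ``a knight on every square'' means $S=V.$ Throughout I recall that $V$ is (vacuously) a perfect dominating set, so it suffices to show that every perfect dominating set $S$ must in fact equal $V.$

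First I would treat $KN_{\mathbb{N},\mathbb{N}}.$ This board has a lower-left corner at $(1,1),$ and every square in columns $1$ through $5$ and rows $1$ through $7$ has exactly the neighborhood it has in $KN_{n,m}$ for all $n,m \geq 7.$ The proof of Theorem \ref{5x5} never refers to squares outside this window: it enumerates, up to symmetry, the $13$ ways a perfect dominating set can dominate the lower-left $3\times 3$ sub-board, and in each case shows that dominating the remaining undominated corner square forces a knight onto every square. In particular, the infinite corner falls under the ``$m \geq 7$'' branch of Case~$3,$ which is handled there and still forces a full board. Applying that identical case analysis to $KN_{\mathbb{N},\mathbb{N}},$ any perfect dominating set $S$ is forced to fill the corner region, and the filling cascades outward across the quarter-plane, so $S=V;$ hence there is no nontrivial perfect dominating set.

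Next I would treat $KN_{\mathbb{Z},\mathbb{N}}.$ This board has no corner, so the corner argument does not apply; instead it has a straight horizontal boundary (row $1$), and near this boundary its local structure is identical to the boundary of $KN_{\mathbb{Z},m}$ for every $m \geq 5.$ I would invoke the computer-assisted forcing in the proof of Proposition \ref{infinite bands}: perfectly dominating the boundary sub-board of $3$ rows and $12$ columns admits no nontrivial completion. A knight can influence a square in rows $1$--$3$ only from rows $1$--$5,$ so that argument uses only squares in rows $1$--$5,$ all present in $KN_{\mathbb{Z},\mathbb{N}};$ it therefore applies verbatim and forces the near-boundary rows to be completely filled with knights over the chosen $12$ columns. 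Sliding this window across all of $\mathbb{Z}$ fills rows $1$--$5$ over every column. A short upward cascade then finishes: a non-knight square $(c,6)$ would be adjacent to the two knights $(c-1,4)$ and $(c+1,4),$ violating perfect domination, so row $6$ is forced, then row $7,$ and so on up the half-plane. Thus $S=V,$ and $KN_{\mathbb{Z},\mathbb{N}}$ has no nontrivial perfect dominating set either.

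The main obstacle is making the transfer rigorous, that is, confirming that the borrowed arguments are genuinely local. For Theorem \ref{5x5} I must verify that each of the $13$ cases, including the Case~$3$ analysis reaching up to row $7$ and out to column $5,$ reaches its contradiction using only squares present in the quarter-plane and never invokes a top or right edge. For Proposition \ref{infinite bands} I must likewise check that the $3\times 12$ boundary forcing uses no top boundary, and I must supply the sliding-window and upward-cascade steps that promote ``near-boundary rows filled'' to ``the whole half-plane filled.'' Once this locality is established, the infinitude of both boards turns ``a knight on every square'' directly into $S=V,$ completing both cases.
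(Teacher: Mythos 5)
Your proposal takes essentially the same approach as the paper, which disposes of both boards in two sentences: $KN_{\mathbb{N},\mathbb{N}}$ is handled by the corner-forcing argument of Theorem \ref{5x5}, and $KN_{\mathbb{Z},\mathbb{N}}$ by the $3\times 12$ boundary-forcing argument from Proposition \ref{infinite bands}. Your write-up is in fact more careful than the paper's, since you explicitly verify the locality of the borrowed arguments and supply the sliding-window and upward-cascade steps that the paper leaves implicit in the phrase ``follows directly.''
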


\begin{proof}
	That $KN_{\mathbb{N},\mathbb{N}}$ has no nontrivial perfect dominating set again follows directly from our proof of Theorem \ref{5x5}. 
	
	That $KN_{\mathbb{Z},\mathbb{N}}$ has no nontrivial perfect dominating set follows directly from our argument in Proposition \ref{infinite bands} that $KN_{\mathbb{Z},m}$ has no nontrivial perfect dominating set for $m \geq 5.$	
\end{proof}

\noindent \textbf{Conclusion.} Exact values or upper bounds of the perfect domination number have been given for all knights graphs except $KN_{8k+1,3}, KN_{8k+2,3},$ and $KN_{8k+3,3}.$

\end{document}